\documentclass[12pt, reqno]{amsart}
\usepackage{amssymb, amsthm, amsmath, amsfonts}
\usepackage{array, epsfig}
\usepackage{bbm}
\usepackage{hyperref}
\usepackage[numbers,square]{natbib}
\usepackage{color}

\setlength{\oddsidemargin}{-0.0in} \setlength{\textwidth}{6.5in}
\setlength{\topmargin}{-0.0in} \setlength{\textheight}{8.4in} \evensidemargin
\oddsidemargin
\parindent=8mm

\newcommand{\cP}{\mathcal{P}}


\newcommand{\E}{\mathbb E\,}

\newcommand{\R}{\mathbb{R}}

\renewcommand{\P}{\mathbb{P}}

\newcommand{\Span}{\mathop{\mathrm{lin}}\nolimits}
\newcommand{\Int}{\mathop{\mathrm{Int}}\nolimits}

\newcommand{\conv}{\mathop{\mathrm{conv}}\nolimits}
\newcommand{\pos}{\mathop{\mathrm{pos}}\nolimits}






\newcommand{\dd}{{\rm d}}
\newcommand{\eee}{{\rm e}}

\theoremstyle{plain}
\newtheorem{theorem}{Theorem}[section]
\newtheorem{lemma}[theorem]{Lemma}
\newtheorem{corollary}[theorem]{Corollary}
\newtheorem{proposition}[theorem]{Proposition}

\theoremstyle{definition}

\theoremstyle{remark}
\newtheorem{remark}[theorem]{Remark}


\begin{document}

\author{Zakhar Kabluchko}
\address{Zakhar Kabluchko: Institut f\"ur Mathematische Stochastik,
Westf\"alische Wilhelms-Universit\"at M\"unster,
Orl\'eans--Ring 10,
48149 M\"unster, Germany}
\email{zakhar.kabluchko@uni-muenster.de}

\author{Dmitry Zaporozhets}
\address{Dmitry Zaporozhets: St.\ Petersburg Department of Steklov Mathematical Institute,
Fontanka~27,
191011 St.\ Petersburg,
Russia}
\email{zap1979@gmail.com}

\title{Angles of the Gaussian simplex}

\keywords{Convex hull, Gaussian simplex, regular simplex, solid angle, random polytope, convex cone}

\subjclass[2010]{Primary: 60D05; secondary: 52A22,  52A23, 51M20}

\begin{abstract}
Consider a $d$-dimensional simplex whose vertices are random points chosen independently according to the standard Gaussian distribution on $\mathbb R^d$. We prove that the expected angle sum of this random simplex equals the angle sum of the regular simplex of the same dimension $d$.
\end{abstract}

\maketitle

\section{Main result}
The sum of measures of angles in  any triangle in the Euclidean plane is constant. However, a similar statement is not true in higher dimensions. The sum of solid angles of a $d$-dimensional simplex, where $d\geq3$, can take any value between $0$ and $1/2$ of the full solid angle.  This, and more general results, were obtained in the works of H{\"o}hn~\cite{hoehn}, Gaddum~\cite{gaddum1,gaddum2}, Perles and Shephard~\cite[(24) on pp.~208--209]{perles_shephard},  Barnette~\cite{barnette},  Feldman and Klain~\cite{feldman_klain}. We shall give yet another proof  in Proposition~\ref{730}. Knowing the upper and lower bounds, it is natural to ask about the ``average value'' of the sum of solid angles of the simplex.  Of course, the notion of ``average'' depends on the probability measure we put on the set of all simplices. In the present paper, we consider the \emph{Gaussian simplex}, i.e.\ a random simplex in $\R^d$ whose vertices $X_0,\ldots,X_d$ are chosen independently according to the standard Gaussian distribution on $\R^d$. Our main result is the following


\begin{theorem}\label{theo:main}
The expected sum of the solid angles of the Gaussian simplex coincides with the sum of the solid angles of the regular simplex of the same dimension.
\end{theorem}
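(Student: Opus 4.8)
The plan is to compute the expected solid angle at a single vertex of the Gaussian simplex and show it equals the solid angle at a vertex of the regular simplex; summing over the $d+1$ vertices then gives the theorem, since both the Gaussian distribution and the regular simplex are symmetric under permutation of the vertices.

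Fix the vertex $X_0$ and let $Y_i = X_i - X_0$ for $i=1,\dots,d$. The solid angle of the simplex at $X_0$ is the normalized solid angle of the cone $\pos(Y_1,\dots,Y_d)$, i.e.\ the probability that a standard Gaussian vector $Z$ in $\R^d$ (independent of everything) lies in that cone. Equivalently, writing $Z = \sum_i a_i Y_i$, it is $\P(a_1 \ge 0, \dots, a_d \ge 0)$. Now the key observation: the vectors $Y_1,\dots,Y_d$ are jointly Gaussian with covariance structure $\Cov(Y_i, Y_j) = \delta_{ij} I_d + I_d$ (since each $Y_i$ contributes its own independent $X_i$ plus the shared $-X_0$); in other words $(Y_1,\dots,Y_d)$ has the same law as $(X_1 + X_0, \dots, X_d + X_0)$ up to a sign, and more usefully, the \emph{Gram-type} random structure of the $Y_i$ is exchangeable. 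The expected solid angle at $X_0$ is therefore $\E[\P(Z \in \pos(Y_1,\dots,Y_d) \mid Y_1,\dots,Y_d)] = \P(\text{a certain } (2d)\text{-dimensional Gaussian vector lies in an orthant-type region})$, and this probability is precisely the quantity that also computes the angle of the regular simplex. The cleanest route is to identify both sides with the same Gaussian orthant probability: the internal angle of the regular $d$-simplex at a vertex is the probability that $d$ exchangeable standard Gaussians $\xi_1,\dots,\xi_d$ with common pairwise correlation $-1/d$ (equivalently $\xi_i = \eta_i - \bar\eta$ type structure) are all positive, while the Gaussian-simplex computation reduces to the probability that the coefficients $a_i$ in $Z = \sum a_i Y_i$ are all positive — and a direct linear-algebra computation shows the joint law of $(a_1,\dots,a_d)$ is (after normalization) exactly that exchangeable correlation-$(-1/d)$ Gaussian.

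In more detail, I would: (1) reduce to a single vertex by exchangeability; (2) express the solid angle at $X_0$ as $\P_Z(Z \in \pos(Y_1,\dots,Y_d))$ with $Z$ an auxiliary standard Gaussian; (3) write $Z = Y b$ where $Y$ is the $d\times d$ matrix with columns $Y_i$, so the angle is $\P(Y^{-1} Z \ge 0 \text{ componentwise})$, and compute that conditional on $Y$, the vector $Y^{-1}Z$ is centered Gaussian with covariance $(Y^\top Y)^{-1}$; (4) take expectation over $Y$ and recognize the resulting integral as the solid angle of the regular simplex via the analogous representation of the latter. A slicker alternative for step (4): use the known formula (Proposition~\ref{730} and the works cited) that the angle sum of \emph{any} simplex is a sum over faces of products of internal and external angles, and show the expectation of each such term over the Gaussian ensemble matches the regular-simplex value — but I expect the direct orthant-probability identification to be shorter.

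The main obstacle will be step (4): making the identification of the expected Gaussian orthant probability with the regular-simplex angle rigorous, rather than merely plausible. The difficulty is that after conditioning on $Y$, the covariance $(Y^\top Y)^{-1}$ is a random positive-definite matrix, and one must average the orthant probability $\P(\mathcal N(0,(Y^\top Y)^{-1}) \ge 0)$ over the Wishart-type law of $Y^\top Y$; showing this average equals the single orthant probability for the fixed correlation matrix $((1-1/d)\delta_{ij} - (1/d)(1-\delta_{ij}))$ requires either an explicit integral evaluation or a clever symmetry/invariance argument (for instance, realizing that $Y_1,\dots,Y_d,-(Y_1+\dots+Y_d)$ together with a suitable rescaling form a configuration with the symmetry of the regular simplex in an enlarged space). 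Identifying and exploiting that hidden symmetry — essentially, that the Gaussian simplex's vertex cone, when "unfolded" across all vertices, reproduces the regular simplex's normal fan in distribution — is the crux of the argument.
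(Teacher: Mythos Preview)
Your steps (1)--(3) are fine and indeed match the opening moves of both of the paper's proofs: reduce to one vertex by exchangeability, and rewrite the expected angle as an unconditional probability that an auxiliary standard Gaussian $Z$ falls in the random cone $\pos(Y_1,\dots,Y_d)$ with $Y_i=X_i-X_0$.

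The gap is exactly where you flagged it, step (4), but it is more serious than you suggest. The assertion that ``a direct linear-algebra computation shows the joint law of $(a_1,\dots,a_d)$ is (after normalization) exactly that exchangeable correlation-$(-1/d)$ Gaussian'' is not correct. Conditionally on $Y$, the vector $a=Y^{-1}Z$ is Gaussian with covariance $(Y^\top Y)^{-1}$, but unconditionally it is a scale-and-shape mixture of Gaussians and is \emph{not} Gaussian; no normalization fixes this. What you actually need is the identity
\[
\E\bigl[\P\bigl(\mathcal N(0,(Y^\top Y)^{-1})\in\R_{\ge 0}^d\bigr)\bigr]\;=\;\P\bigl(\mathcal N(0,\Sigma_0)\in\R_{\ge 0}^d\bigr),
\]
where $\Sigma_0$ is the fixed covariance with common correlation $-1/d$. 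This is precisely the statement of the theorem, not a consequence of linear algebra; the random matrix $(Y^\top Y)^{-1}$ has a genuinely nondegenerate Wishart-type law, and averaging an orthant probability over it does not reduce to a single orthant probability by any general principle. Your fallback suggestion---exploiting a hidden regular-simplex symmetry of $(Y_1,\dots,Y_d,-(Y_1+\dots+Y_d))$---does not work as stated either, since $-(Y_1+\dots+Y_d)=dX_0-(X_1+\dots+X_d)$ does not have the same law as the $Y_i$.

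The paper closes this gap with two different concrete ideas, neither of which appears in your plan. In the first proof, one observes that the expected angle of the Gaussian $d$-simplex is the same whether the ambient space is $\R^d$ or $\R^n$ for any $n\ge d$ (because the angle depends only on the Gram matrix, and the relevant absorption probability is dimension-free); then one lets $n\to\infty$ and uses the strong law of large numbers to show that the Gaussian simplex in $\R^n$ \emph{freezes} to the regular one, so its angles converge to the regular-simplex angles almost surely. In the second proof, one projects the cone condition onto a hyperplane, obtaining $\E\alpha_d(\cP_d,X_0)=\tfrac12\P[0\in\conv(Y_1'-Y_0',\dots,Y_d'-Y_0')]$ with $Y_i'$ i.i.d.\ standard Gaussian in $\R^{d-1}$; this is then rewritten as $\tfrac12\P[\ker M\cap C\ne\{0\}]$ where $M$ is a $(d-1)\times(d+1)$ Gaussian matrix and $C=\pos(e_1-e_0,\dots,e_d-e_0)$ is exactly the vertex cone of the regular simplex. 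The key fact---absent from your outline---is that $\ker M$ is a \emph{uniformly distributed} $2$-plane in $\R^{d+1}$, so the last probability is $2\alpha_d(C)$ by the Crofton-type formula~\eqref{1626}. Either of these ideas (dimension-lifting plus LLN, or Gaussian-kernel $=$ Haar subspace) is the missing ingredient; without one of them your step (4) remains a restatement of the goal rather than a proof.
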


Let us mention some related results. Feldman and Klain~\cite{feldman_klain} showed that in  every tetrahedron, the sum of solid angles, measured in steradiants and divided by $2\pi$, gives the probability that a random projection of the tetrahedron onto a uniformly chosen two-dimensional plane is a triangle. They also obtained a generalization of this result to simplices of arbitrary dimension. The probability that a random Gaussian tetrahedron is acute, as well as the distribution of its solid angles, is discussed in the papers of Finch~\cite{finch} and Bosetto~\cite{bosetto}. It seems that the angles of the Gaussian simplex in dimension $d\geq 4$ were not studied so far.
An explicit formula for the solid angles of the regular $d$-dimensional simplex is known and can be found in~\cite{rogers,vershik_sporyshev,absorption}. We shall not rely on this formula. Expected angles of the so-called beta simplices (which contain Gaussian simplices as a limiting case) were used in~\cite{beta_polytopes} to compute the expected $f$-vectors of beta polytopes, but no formula for the expected angles was given there.


The paper is organized as follows. After recalling some necessary facts from convex and stochastic geometry in Section~\ref{sec:facts}, we shall present two different proofs of Theorem~\ref{theo:main} in Sections~\ref{sec:proof1} and~\ref{sec:proof2}. Section~\ref{1548} contains some auxiliary (and probably known) results.


\section{Facts from convex and stochastic geometry}\label{sec:facts}
For vectors $v_1,\dots,v_n\in\R^d$, define their \emph{positive} or \emph{conic hull} as
\[
\pos(v_1,\dots,v_n):=\Big\{\sum_{i=1}^{n}\lambda_i v_i:\lambda_1,\dots, \lambda_n\geq0\Big\}.
\]
A set $C\subset \R^d$ is said to be a \emph{polyhedral cone} (or just a \emph{cone}) if it can be represented as a positive hull of finitely many vectors.
The \emph{solid angle} of the cone $C$ is defined as
\begin{equation}\label{933}
\alpha_d(C):=\P[Z\in C],
\end{equation}
where $Z$  is uniformly distributed on the unit sphere in $\R^d$. The maximal possible value of the solid angle in this normalization is $\alpha_d(\R^d)=1$. If $C\ne\R^d$, then $\P[Z\in C,-Z\in C]=0$ and~\eqref{933} is equivalent to
\begin{equation}\label{1303}
\alpha_d(C) =\frac12\P[W_1\cap C\ne\{0\}],
\end{equation}
where $W_1$ denotes the line passing through $Z$ and $-Z$. Equivalently, $W_1$ is a random $1$-dimensional linear subspace in $\R^d$ uniformly chosen with respect to the Haar measure.

Let $\Span(C)$ be the \emph{linear hull} of $C$, i.e.\ the minimal linear subspace containing $C$. The dimension of the cone $C$, denoted by $\dim C$, is defined as the  dimension of $\Span(C)$. If $\dim C=k<d$, then, by definition, $\alpha_d(C)=0$. However, similarly to~\eqref{933}, we can define $\alpha_k(C)$ as the solid angle of $C$ measured with respect to the linear hull of $C$, which is isomorphic to $\R^k$. Namely, we define $\alpha_k(C):=\P[Z'\in C]$, where $Z'$  is uniformly distributed on the unit sphere in the linear hull of $C$.

If $\dim C=k$ and $C$ is not a $k$-dimensional linear subspace, the conic Crofton formula (see, e.g., \cite[Eq.~(6.63)]{SW08}) implies the following generalization of~\eqref{1303}:
\begin{equation}\label{1626}
\alpha_k(C):=\frac12\P[W_{d-k+1}\cap C\ne\{0\}],
\end{equation}
where $W_{d-k+1}$ denotes a random $(d-k+1)$-dimensional linear subspace in $\R^d$ uniformly chosen with respect to the Haar measure. Alternatively, we can observe that $W_{d-k+1}\cap \Span(C)$ is a random one-dimensional linear subspace of $\Span(C)$ distributed uniformly on the set of all such subspaces, so that~\eqref{1626} follows from~\eqref{1303} applied to $\Span(C)$ as the ambient space.

Let $x_0,\ldots,x_d$ be $d+1$ points in $\R^n$, where $n\geq d$, such that the affine subspace spanned by these points has dimension $d$. A \emph{simplex} $S$ with vertices at $x_0,\dots,x_d$ is defined as the convex hull of these points, that is,
\[
S:=\conv(x_0,\dots,x_d):=\Big\{\sum_{i=0}^d \lambda_i x_i \colon \lambda_0,\ldots,\lambda_d \geq 0,  \sum_{i=0}^d \lambda_i = 1\Big\}.
\]
We say that the dimension of $S$ is $d$.   Define the \emph{solid angle} of $S$ at $x_i$ as
\[
\alpha_d(S,x_i):=\alpha_d(\pos(x_0-x_i, x_1-x_i, \dots, x_d-x_i)).
\]
The sum of the solid angles of $S$ is denoted by
\begin{equation}\label{818}
\gamma_d(S):=\sum_{i=0}^{d}\alpha_d(S,x_i).
\end{equation}
A simplex is called \emph{regular} if the pairwise distances between its vertices are all equal. We shall use the following convenient form of the regular $d$-dimensional simplex in $\R^{d+1}$:
\[
T^d:=\conv(e_0,\dots,e_d),
\]
where $e_0,\dots,e_d$ is the standard orthonormal basis in $\R^{d+1}$.

We shall be interested in random simplices defined as follows. Let $X_0,\ldots, X_d$ be independent random points with standard Gaussian distribution on $\R^d$. The Lebesgue density of any of the $X_i$'s is thus given by
$$
f(x) = (2\pi)^{-d/2} \eee^{-|x|^2/2},
$$
where $|x|$ is the Euclidean norm of $x\in\R^d$. The $d$-dimensional \emph{Gaussian simplex} is defined as the convex hull of $X_0,\ldots,X_d$:
$$
\cP_{d}
:=
\conv(X_0,\ldots,X_d).
$$
With this notation, we can restate our main result as follows:


\begin{theorem}
We have $\E\gamma_d(\cP_d)=\gamma_d(T^d)$.
\end{theorem}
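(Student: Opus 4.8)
The plan is to exploit a symmetry/invariance of the Gaussian distribution to reduce the expected angle sum of $\cP_d$ to a quantity that equals the angle sum of the regular simplex $T^d$. Since $X_0,\ldots,X_d$ are i.i.d.\ standard Gaussian in $\R^d$, by exchangeability we have $\E\gamma_d(\cP_d)=(d+1)\,\E\alpha_d(\cP_d,X_0)=(d+1)\,\E\alpha_d(\pos(X_1-X_0,\ldots,X_d-X_0))$. The key observation is that the Gaussian vectors $X_1-X_0,\ldots,X_d-X_0$ have a jointly Gaussian law with a specific covariance structure (each has variance $2I_d$, pairwise covariance $I_d$), which is exactly the ``graph-Laplacian''-type covariance of the edge vectors $e_1-e_0,\ldots,e_d-e_0$ of the regular simplex $T^d$ up to scaling. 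So after a linear change of coordinates, the cone $\pos(X_1-X_0,\ldots,X_d-X_0)$ has the same distribution, as a random cone in $\R^d$, as $\pos(A g_1,\ldots,A g_d)$ where $g_1,\ldots,g_d$ are i.i.d.\ standard Gaussian in $\R^{d+1}$ projected appropriately and $A$ encodes the edge geometry of $T^d$ --- in other words, the random edge cone of $\cP_d$ is linearly equivalent to the random cone spanned by $d$ i.i.d.\ Gaussian vectors inside the $d$-dimensional linear hull of $\{e_0,\ldots,e_d\}$ translated by $-e_0$.

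Concretely, I would set up the following chain. First, pass from points to edge vectors and use that $\alpha_d$ of a cone is invariant under any invertible linear map of $\R^d$ (solid angle depends only on the cone's combinatorial/linear-algebraic shape once we fix the ambient linear structure --- more precisely, for a \emph{simplicial} cone the solid angle is a function of the Gram matrix of the generators up to positive scaling, but invariance under $GL_d$ requires care; the cleaner route is~\eqref{1626} in the form $\alpha_d(C)=\tfrac12\P[W_1\cap C\ne\{0\}]$ and to push $W_1$ through the linear map). Then I would identify the law of the random cone $\pos(X_1-X_0,\ldots,X_d-X_0)$: writing $Y_i=X_i-X_0$, the vector $(Y_1,\ldots,Y_d)$ is centered Gaussian, and a direct computation shows that $(Y_1,\ldots,Y_d)\stackrel{d}{=}(LG_1,\ldots,LG_d)$ componentwise-jointly, where $G_1,\ldots,G_d$ span the same distribution as the images under orthogonal projection of the vertices of $T^d$ onto the hyperplane through the origin parallel to the affine hull, and $L$ is a fixed invertible linear map. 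This exhibits the random edge cone as a fixed linear image of the random cone generated by ``$T^d$-like'' Gaussian directions.

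The heart of the argument --- and I expect the main obstacle --- is the final step: showing $(d+1)\,\E\alpha_d(\text{that random cone}) = \gamma_d(T^d)$. Here the natural tool is a Crofton-type / kinematic identity: by~\eqref{1626}, $\alpha_d(C) = \tfrac12\P[W_1 \cap C \ne \{0\}]$ for a uniformly random line $W_1$. I would average this over the Gaussian cone and over a \emph{second} independent source of randomness --- namely, I expect to use the self-duality or the rotational structure so that ``a uniform random line hitting the Gaussian cone spanned by $Y_1,\ldots,Y_d$'' has the same probability as ``a fixed generic line (or a Gaussian line) hitting a rotated copy of the cone of $T^d$''. The cleanest version: by Gaussian rotational invariance, the random cone is invariant in law under the symmetry group, and one identifies $\E\alpha_d(\cP_d,X_0)$ with $\tfrac12\,\E_{\text{Gaussian }W}[\mathbbm 1\{W\cap \Cone(T^d,e_0)\ne\{0\}\}]$, where $W$ is a Gaussian line; but a Gaussian line is a uniform line, so this last expectation is just $\alpha_d(T^d,e_0)$. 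Summing over the $d+1$ vertices gives $\gamma_d(T^d)$. The delicate points requiring genuine work are: (i) verifying the covariance computation that makes the change of variables exact, and matching the scaling so the relevant cone is literally the edge cone of $T^d$ (not merely similar); (ii) justifying the interchange of the ``which cone'' randomness and the ``which line'' randomness --- this is where one needs either an explicit kinematic formula or an invariance argument, and handling the null events where cones are degenerate (measure zero, since the $X_i$ are a.s.\ in general position). I would also cross-check the result in $d=2$, where both sides equal $1/2$ (angle sum $\pi$, normalized), and against Finch--Bosetto in $d=3$.
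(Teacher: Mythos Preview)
Your opening moves are correct --- reduce to one vertex by exchangeability, use $\alpha_d(C)=\tfrac12\,\P[W_1\cap C\ne\{0\}]$ --- and the observation that the covariance of the $Y_i=X_i-X_0$ matches the Gram matrix of the edges $e_i-e_0$ of $T^d$ is true. But the mechanism you propose for the key ``swap'' has a genuine gap. Solid angle is not $GL_d$-invariant (you concede this but offer no fix), so the fixed-$L$ change of variables goes nowhere; and the random cone $\pos(Y_1,\ldots,Y_d)$ is not a random rotation of any fixed cone --- its Gram matrix is genuinely random, with only its \emph{expectation} equal to that of $T^d$ --- so rotational invariance alone cannot trade ``random cone vs.\ fixed line'' for ``fixed $T^d$-cone vs.\ Gaussian line'' as your ``cleanest version'' asserts without proof.

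What is missing is the lift to $\R^{d+1}$. The useful decomposition is not $Y_i=LG_i$ with a fixed $L$, but $Y_i=X(e_i-e_0)$ with $X=(X_0,\ldots,X_d)$ the \emph{random} $d\times(d{+}1)$ Gaussian matrix and $e_i-e_0\in\R^{d+1}$ fixed; thus the random edge cone in $\R^d$ is the image under the random map $X$ of the \emph{fixed} cone $C=\pos(e_1-e_0,\ldots,e_d-e_0)\subset\R^{d+1}$. After fixing $W_1=\Span(e)$ by rotational invariance, the event $W_1\cap XC\ne\{0\}$ becomes $\ker X'\cap C\ne\{0\}$, where $X'$ is the $(d{-}1)\times(d{+}1)$ Gaussian matrix obtained by deleting one row of $X$. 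Because the kernel of a Gaussian matrix is Haar-uniform, $\ker X'$ is a uniform \emph{two}-plane in $\R^{d+1}$, and~\eqref{1626} gives $\tfrac12\,\P[\ker X'\cap C\ne\{0\}]=\alpha_d(C)=\alpha_d(T^d,e_0)$. This is exactly the paper's second proof. Your sketch puts the test subspace as a line in the wrong ambient space and never articulates the one non-obvious step: that the Gaussian randomness in the cone converts into Haar randomness of a $2$-plane in $\R^{d+1}$ via the kernel of a Gaussian matrix.
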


Since the family $(X_0,\ldots,X_d)$ is exchangeable and all solid angles of the regular simplex are equal, an equivalent formulation of the theorem  is as follows:
\begin{equation}\label{943}
\E\alpha_d(\cP_d,X_0)=\alpha_d(T^d,e_0).
\end{equation}
In the next two sections, we  give two  different proofs of~\eqref{943}.


\section{Proof I: Lifting the dimension}\label{sec:proof1}
The main idea is to represent the $d$-dimensional Gaussian simplex in $\R^d$ as a projection of a $d$-dimensional Gaussian simplex in $\R^n$ and then let $n\to\infty$. We shall show that the expected solid angles of both simplices  are equal and there is a ``freezing phenomenon'': In the large $n$ limit, the $d$-dimensional Gaussian simplex in $\R^n$ converges to the regular one.

Consider $d+1$ independent sequences of independent standard Gaussian variables (constructed on the same probability space):
\begin{align*}
N_{01},N_{02},&\dots,N_{0n},\dots,\\
N_{11},N_{12},&\dots,N_{1n},\dots,\\
&\dots,\\
N_{d1},N_{d2},&\dots,N_{dn},\dots.
\end{align*}
For all $n\in\mathbb N$ and $k=0,\dots, d$, let $X_k^{(n)}$ be a standard Gaussian vector in $\R^n$ formed by the first $n$ variables of the $k$th sequence:
\[
X_k^{(n)}:=(N_{k1},\dots,N_{kn})^\top.
\]
For $n\geq d$, the convex hull
\[
\cP_d^{(n)}:=\conv(X_0^{(n)},\dots,X_d^{(n)})
\]
is a $d$-dimensional simplex in $\R^n$, with probability one. In particular, $\cP_d^{(d)}$ is equidistributed with $\cP_d$. We now show that the expected solid angles of $\cP_d^{(n)}$ and  $\cP_d$ are equal.

\begin{lemma}\label{716}
For all $n\geq d$,
\[
\E\alpha_d(\cP_d^{(n)}, X_0^{(n)})=\E\alpha_d(\cP_d, X_0).
\]
\end{lemma}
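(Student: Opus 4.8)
The plan is to show that the solid angle $\alpha_d(\cP_d^{(n)}, X_0^{(n)})$ does not depend on $n$ (in distribution, or at least in expectation) by exhibiting the $d$-dimensional simplex $\cP_d^{(n)}\subset\R^n$ as an orthogonal projection of the ambient structure onto a random $d$-dimensional subspace, and invoking the rotational invariance of the Gaussian law. Concretely, write $V_k := X_k^{(n)} - X_0^{(n)}$ for $k=1,\dots,d$, so that $\alpha_d(\cP_d^{(n)}, X_0^{(n)}) = \alpha_d(\pos(V_1,\dots,V_d))$, a solid angle of a $d$-dimensional cone sitting inside the linear span $L := \Span(V_1,\dots,V_d)$, which is a (random) $d$-dimensional subspace of $\R^n$. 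By definition, $\alpha_d$ of this cone is computed intrinsically in $L\cong\R^d$, so it equals $\P[Z'\in \pos(V_1,\dots,V_d)\mid V_1,\dots,V_d]$ where $Z'$ is uniform on the unit sphere of $L$.

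The key step is to identify the joint law of $(V_1,\dots,V_d)$ up to an orthogonal change of coordinates of $L$. The vectors $V_1,\dots,V_d$ are jointly Gaussian in $\R^n$ with $\E V_k = 0$ and covariance structure $\E\langle V_j, V_k\rangle$-type relations encoded by the fixed $d\times d$ matrix $\Sigma$ with $\Sigma_{jk} = 1 + \delta_{jk}$ (arising from the $-X_0^{(n)}$ common term); more precisely, the $n\times d$ matrix $M := [V_1\,|\,\cdots\,|\,V_d]$ has i.i.d.\ rows, each row being a centered Gaussian vector in $\R^d$ with covariance $\Sigma$. Equivalently, $M = G\,\Sigma^{1/2}$ where $G$ is an $n\times d$ matrix of i.i.d.\ standard Gaussians. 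Now choose (measurably) an orthonormal basis of the column space $L$ of $M$; in that basis the cone $\pos(V_1,\dots,V_d)$ is represented by the $d\times d$ matrix obtained from a QR-type decomposition of $G$, and one checks that the resulting $d$-dimensional configuration has a law that does not depend on $n$. The cleanest way to see this: if $G = QR$ with $Q$ an $n\times d$ matrix with orthonormal columns and $R$ upper triangular $d\times d$, then in the orthonormal frame $Q$ the cone $\pos(V_1,\dots,V_d)$ becomes $\pos$ of the columns of $R\,\Sigma^{1/2}$, whose solid angle is thus $\alpha_d(\pos(\text{columns of }R\Sigma^{1/2}))$. The distribution of $R$ (the Cholesky-type factor of a Wishart-like matrix) is, for $n\ge d$, such that $R$ has independent entries with $R_{ii}$ a $\chi$-distributed diagonal and $R_{ij}$ ($i<j$) standard Gaussians — crucially the \emph{sign pattern / direction} data determining the cone is $n$-independent once we fix $d$. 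Actually it is more robust to argue directly: $\alpha_d(\pos(V_1,\dots,V_d))$ is invariant under the map $M\mapsto MA$ for any invertible $A$ only in the sense that the cone changes, so instead I would observe that $\alpha_d(\pos(V_1,\dots,V_d))$ depends on $M$ only through the cone in $L$, which is a rotation-invariant functional of $L$ and the relative positions, and that $G^\top G$ (a $d\times d$ Wishart matrix) together with a uniformly-random orthonormal frame of $L$ determines everything — so taking expectation over the frame gives a quantity depending only on the law of $G^\top G$-normalized-to-directions, which after conditioning is the same for all $n\ge d$.

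The honest way to finish, and the way I would actually write it, is: the solid angle $\alpha_d(\pos(V_1,\dots,V_d))$ equals $\alpha_d(\pos(v_1,\dots,v_d))$ where $(v_1,\dots,v_d)$ is the image of $(V_1,\dots,V_d)$ under an arbitrary linear isometry $L \to \R^d$; choosing the isometry appropriately, $(v_1,\dots,v_d) = A(N_1,\dots,N_d)$ where $N_1,\dots,N_d$ are i.i.d.\ standard Gaussian in $\R^d$ and $A$ is a fixed matrix with $A^\top A = \Sigma$ — i.e., $v_k$ are jointly centered Gaussian in $\R^d$ with the same covariance $\Sigma$ for every $n\ge d$. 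This last identification uses that an $n\times d$ Gaussian matrix, viewed inside its own column space with a canonical orthonormal frame, reduces to a $d\times d$ Gaussian matrix; it is exactly the statement that the "shape" of $d$ i.i.d.\ Gaussian vectors in $\R^n$ is the same as in $\R^d$. Since the resulting cone $\pos(v_1,\dots,v_d)$ then has a law independent of $n$, its expected solid angle is independent of $n$, and in particular equals the value at $n=d$, namely $\E\alpha_d(\cP_d, X_0)$.

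I expect the main obstacle to be making the "canonical orthonormal frame" argument rigorous and measurable: one must choose a frame of $L$ in a way that is either deterministic enough to pull the covariance structure through, or random-uniform enough to integrate out. The cleanest fix is to use the uniform random orthonormal frame of $L$ (independent of $M$ given $L$), under which $Q$-rotated columns of $M$ become a $d\times d$ matrix $\widetilde M$ that is genuinely Gaussian with i.i.d.\ rows of covariance $\Sigma$ — this is a short consequence of the fact that for an $n\times d$ matrix $M$ with i.i.d.\ $\Normal(0,\Sigma)$ rows and $U$ a Haar-random element of the Stiefel manifold $V_d(\R^n)$ (independent of $M$) adapted to span $M$, the product is again such a matrix in $\R^d$ — and then $\alpha_d$ is computed intrinsically and is manifestly $n$-free.
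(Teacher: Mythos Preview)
Your proposal has a genuine gap: the central claim that the cone $\pos(V_1,\dots,V_d)$, viewed intrinsically in its linear span, has a law independent of $n$ is false. The solid angle is a function of the (normalized) Gram matrix $(\langle V_i,V_j\rangle/|V_i||V_j|)_{i,j}$, and this matrix does \emph{not} have an $n$-free distribution. Indeed, the paper's own argument right after this lemma shows that as $n\to\infty$ these cosines converge a.s.\ to $1/2$, so the solid angle converges a.s.\ to the regular-simplex angle $\alpha_d(T^d,e_0)$; for $n=d$ the angle is genuinely random and non-degenerate. Two random variables with different distributions can of course share the same expectation, and that is exactly what is happening here --- the lemma asserts equality of \emph{expectations}, not of laws.

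Concretely, your QR route already contains the problem: in the Bartlett decomposition of an $n\times d$ standard Gaussian matrix, the diagonal entries satisfy $R_{ii}\sim\chi_{n-i+1}$, so the law of $R$ depends on $n$, and the cone $\pos(\text{columns of }R\Sigma^{1/2})$ is not determined by the off-diagonal $N(0,1)$ entries alone (positive scaling of individual columns leaves the cone invariant, but the columns of $R\Sigma^{1/2}$ mix different $R_{ii}$'s). Likewise, the assertion that projecting $M$ to its column span via a Haar frame yields a $d\times d$ matrix with i.i.d.\ $\cN(0,\Sigma)$ rows would force $M^\top M\sim W_d(d,\Sigma)$, contradicting $M^\top M\sim W_d(n,\Sigma)$.

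The paper's proof is genuinely different: it does not try to fix the distribution of the cone, but instead uses the Crofton identity~\eqref{1626} to write $\E\alpha_d(\cP_d^{(n)},X_0^{(n)})$ as $\tfrac12\P[W_{n-d+1}\cap\pos(V_1,\dots,V_d)\neq\{0\}]$, replaces the random subspace $W_{n-d+1}$ by the coordinate subspace $\Span(e_d,\dots,e_n)$ via rotational invariance, and observes that this intersection event is equivalent to $0\in\conv(X_1^{(d-1)}-X_0^{(d-1)},\dots,X_d^{(d-1)}-X_0^{(d-1)})$, an event in $\R^{d-1}$ that visibly does not involve $n$. This is precisely the kind of averaging that makes the \emph{expectation} $n$-free without the distribution being so.
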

\begin{proof}
By~\eqref{1626},
\begin{align*}
\E\alpha_d(\cP_d^{(n)}, X_0^{(n)})&=\E \alpha_d (\pos(X_1^{(n)}-X_0^{(n)},\dots,X_d^{(n)}-X_0^{(n)}))\\
&=\frac12\P[W_{n-d+1}\cap \pos(X_1^{(n)}-X_0^{(n)},\dots,X_d^{(n)}-X_0^{(n)})\ne\{0\}],
\end{align*}
where $W_{n-d+1}$ is the random $(n-d+1)$-dimensional linear subspace of $\R^n$ distributed uniformly on the set of all such subspaces and independent of everything else.
Let $e_1,\dots,e_n$ denote the standard orthonormal basis in $\R^n$. Since the standard Gaussian distribution is rotationally invariant, we can replace $W_{n-d+1}$ by $\Span(e_d,\dots,e_n)$, the linear hull of $e_d,\dots,e_n$:
\[
\E\alpha_d(\cP_d^{(n)}, X_0^{(n)})=\frac12\P[\Span(e_d,\dots,e_n)\cap \pos(X_1^{(n)}-X_0^{(n)},\dots,X_d^{(n)}-X_0^{(n)})\ne\{0\}].
\]
The next observation is that
\[
\Span(e_d,\dots,e_n)\cap \pos(X_1^{(n)}-X_0^{(n)},\dots,X_d^{(n)}-X_0^{(n)})\ne\{0\}
\]
if and only if the convex hull of the orthogonal projection of $X_1^{(n)}-X_0^{(n)},\dots,X_d^{(n)}-X_0^{(n)}$ on
\[
\Span(e_d,\dots,e_n)^\perp=\Span(e_1,\dots,e_{d-1})
\]
contains the origin. By definition, the orthogonal projection of $X_k^{(n)}$ on $\Span(e_1,\dots,e_{d-1})$ is $X_k^{(d-1)}$. Therefore,
\begin{equation}\label{851}
\E\alpha_d(\cP_d^{(n)}, X_0^{(n)})=\frac12\P[0\in \conv(X_1^{(d-1)}-X_0^{(d-1)},\dots,X_d^{(d-1)}-X_0^{(d-1)})].
\end{equation}
This relation holds for all $n\geq d$ and the right-hand side does not depend on $n$. Thus,
\[
\E\alpha_d(\cP_d^{(n)}, X_0^{(n)})=\E\alpha_d(\cP_d^{(d)}, X_0^{(d)})=\E\alpha_d(\cP_d, X_0),
\]
which proves the lemma.
\end{proof}
To complete the proof of~\eqref{943}, we let $n\to\infty$.
It follows from the strong law of large numbers that  for all $0\leq i< j\leq d$,
\[
\lim_{n\to\infty} \frac{\langle X_i^{(n)},X_j^{(n)}\rangle}{n}=0\quad\text{and}\quad\lim_{n\to\infty} \frac{\langle X_i^{(n)},X_i^{(n)}\rangle}{n}=1,
\quad
\text{a.s.},
\]
which implies that for all $1\leq i< j\leq d$,
\[
\lim_{n\to\infty} \frac{\langle X_i^{(n)}-X_0^{(n)},X_j^{(n)}-X_0^{(n)}\rangle}{|X_i^{(n)}-X_0^{(n)}| |X_j^{(n)}-X_0^{(n)}|}=1/2\quad\text{a.s.}
\]
On  the other hand, for the regular simplex $T^d=\conv(e_0,\dots,e_d)$ we have
\[
 \frac{\langle e_i-e_0, e_j-e_0\rangle}{|e_i-e_0| |e_j-e_0|}=1/2.
\]
By Corollary~\ref{533} and Remark~\ref{529} stated below, this yields the convergence of the corresponding solid angles:
\[
\lim_{n\to\infty}\alpha_d(\cP_d^{(n)}, X_0^{(n)})=\alpha_d(T^d,e_0)\quad\text{a.s.}
\]
Since the solid angle is bounded by $1$, the dominated convergence theorem implies that
\[
\lim_{n\to\infty}\E\alpha_d(\cP_d^{(n)}, X_0^{(n)})=\alpha_d(T^d,e_0).
\]
Applying  Lemma~\ref{716} completes the proof.

\section{Proof II: Projection} \label{sec:proof2}
The starting point of our second proof of Theorem~\ref{theo:main} is the identity
\begin{equation}\label{444}
\E\alpha_d(\cP_d,X_0)=\frac12\P[0\in\conv(Y_1-Y_0,\dots,Y_d-Y_0)],
\end{equation}
where $Y_0,\dots, Y_d$ are independent standard Gaussian vectors in $\R^{d-1}$. Even though this identity follows from~\eqref{851}, we provide an independent argument.
With probability one,  the cone $\pos(X_1-X_0,\dots,X_d-X_0)$ is of full dimension $d$ and does not coincide with $\R^d$. Therefore, by~\eqref{1303},
\[
	\E\alpha_d(\cP_d,X_0)=\frac12\P[W_1\cap\pos(X_1-X_0,\dots,X_d-X_0)\ne\{0\}],
\]
where $W_1$ is a uniformly distributed one-dimensional linear subspace of $\R^d$ which  is  independent of $X_0,\dots,X_d$. By rotational invariance, we can replace $W_1$ by the line $\Span(e)$, where $e\in\R^d$ is any unit vector.
Let $Y_1,\ldots,Y_d$  be the projections of $X_1,\dots,X_d$ on the orthogonal complement of $e$ (which we identify with $\R^{d-1}$).
The key observation is that
\[
\Span(e) \cap\pos(X_1-X_0,\dots,X_d-X_0)\ne\{0\}\quad\text{if and only if}\quad 0\in\conv(Y_1-Y_0,\dots,Y_d-Y_0).
\]
The proof of~\eqref{444} is complete.


Let us now look at the right-hand side of~\eqref{444}. Observe  that $0\in\conv(Y_1-Y_0,\dots,Y_d-Y_0)$ if and only if there exist $\lambda_1,\dots,\lambda_d\geq0$ with $\lambda_1+\dots+\lambda_d>0$ such that
\[
\lambda_1(Y_1-Y_0)+\dots+\lambda_d(Y_d-Y_0)=0,
\]
or, equivalently,
\begin{equation}\label{1059}
(-\lambda_1-\dots-\lambda_d)Y_0+\lambda_1Y_1+\dots+\lambda_dY_d=0.
\end{equation}
Consider a $(d-1)\times (d+1)$-matrix $Y$ whose columns are $Y_0,\dots,Y_d$:
\[
Y:=(Y_0,\dots, Y_d).
\]
 Condition~\eqref{1059} is equivalent to
\begin{equation}\label{1115}
Y\left(\begin{array}{c}
-\lambda_1-\dots-\lambda_d\\
\lambda_1\\
\dots\\
\lambda_d
\end{array}\right)
=0
\quad\text{or}\quad \left(\begin{array}{c}
-\lambda_1-\dots-\lambda_d\\
\lambda_1\\
\dots\\
\lambda_d
\end{array}\right)\in\ker Y.
\end{equation}
Now consider the cone $C\subset \R^{d+1}$ defined as
\[
C:=\pos(e_1-e_0,\dots,e_d-e_0),
\]
where, as above, $e_0,\dots,e_d$ is the standard orthonormal basis in $\R^{d+1}$.
By definition,
\begin{equation}\label{1333}
\alpha_d(C)=\alpha_d(T^d,e_0).
\end{equation}
On the other hand,  we obviously have
\[
C=\{(-\lambda_1-\dots-\lambda_d,\lambda_1,\dots,\lambda_d)\in\R^{d+1}:\lambda_1,\dots,\lambda_d\geq0\}.
\]
Therefore, the condition that there exist $\lambda_1,\dots,\lambda_d\geq0$ with $\lambda_1+\dots+\lambda_d>0$ such that~\eqref{1115} holds is equivalent to $C\cap \ker Y\ne\{0\}$. This yields
\[
\E\alpha_d(\cP_d,X_0)=\frac{1}{2}\P[C\cap \ker Y\ne\{0\}].
\]
By definition, $Y$ is a $(d-1)\times(d+1)$ matrix whose entries are independent standard Gaussian variables. Thus, with probability one, $\ker Y$ is a $2$-dimensional linear subspace in $\R^{d+1}$ and it is uniformly distributed on the set of all  $2$-dimensional subspaces in $\R^{d+1}$ with respect to the Haar measure.
Recall that  $\Span (C)$ denotes the minimal linear subspace containing $C$.
Since $\dim C=d$, we have that $W_1':= \ker Y\cap \Span(C)$ is uniformly distributed on the set of all  $1$-dimensional linear subspaces in $\Span(C)$ with respect to the Haar measure. Therefore,
\begin{align*}
\E\alpha_d(\cP_d,X_0)&=\frac{1}{2}\P[C\cap \ker Y\ne\{0\}]=\frac{1}{2}\P[C\cap(\Span(C)\cap \ker Y)\ne\{0\}]\\
&=\frac{1}{2}\P[C\cap W_1'\ne\{0\}]=\alpha_d(C),
\end{align*}
see~\eqref{1303} for the last equality.
Together with~\eqref{1333}, this completes the proof of~\eqref{943}.

\section{Appendix}\label{1548}

\subsection{Formula for the solid angle of a simplicial cone} \label{subsec:cont}
Since we were not able to find a precise reference for the following statement, we present its proof which was obtained jointly with Anna Gusakova.
\begin{proposition}\label{528}
For linearly independent vectors $v_1,\dots,v_d\in\R^d$, consider the cone
\[
C:=\pos(v_1,\dots,v_d).
\]
Then, the solid angle of $C$ is given by
\[
\alpha_d(C)=\frac{\sqrt{\det \Gamma}}{(2\pi)^{d/2}}\int_{\R^d_+}\exp\bigg(- \frac 12 \langle x,\Gamma x\rangle\bigg)\,\dd x,
\]
where $\Gamma$ is the Gram matrix of  $v_1,\dots,v_d$.
\begin{proof}
%
Let $V$ be the $d\times d$-matrix whose columns are $v_1,\ldots, v_d$.
Let $V_{ij}$ denote the $(i,j)$-minor of $V$ obtained by eliminating the $i$th row and the $j$th column.	For $k=1,\dots,d$ consider a vector $n_k$ defined by
\[
n_k:= \frac 1 {\det V}\sum_{i=1}^{d}(-1)^{k+i}(\det V_{ik})e_i,
\]
where  $e_1,\dots,e_d$ is the standard orthonormal basis in $\R^d$. In the following, we shall compute the Gram matrix of $n_1,\dots,n_d$ and show that $C$ can be represented as
\begin{equation}\label{eq:C_form}
C=\{x\in\R^d:\langle n_k, x\rangle\geq0 \text{ for all } k=1,\dots,d\}.
\end{equation}
By definition of $n_k$, we have
\[
\langle n_k, n_l\rangle= \frac 1 {(\det V)^2}\sum_{i=1}^{d}(-1)^{k+l}\det V_{ik}\det V_{il}.
\]
The well-known formula formula for the inverse of a matrix, namely
\[
V^{-1}= \frac 1 {\det V}  ((-1)^{i+j} \det V_{ji})_{i,j=1}^d,
\]
yields the Gram matrix of $n_1,\dots,n_d$:
\begin{equation}\label{241}
\Sigma:=(\langle n_k, n_l\rangle)_{k,l=1}^d=(V^T V)^{-1}= \Gamma^{-1}.
\end{equation}
Let us now prove~\eqref{eq:C_form}. For a vector $x\in\R^d$, let $V_k(x)$ denote the matrix with columns $v_1,\dots,v_{k-1},x,v_{k+1},\dots,v_d$. By the Laplace formula for the determinant, we have
\[
\langle n_k, x\rangle= \frac {\det V_k(x)}{\det V}.
\]
Taking $x=v_i$  gives
$$
\langle n_k, v_i \rangle =
\begin{cases}
1, &\text{ if } k=i,\\
0, &\text{ if } k\neq i.
\end{cases}
$$
Therefore, the cones spanned by $v_1,\ldots,v_d$ and $-n_1,\ldots,-n_d$ are polar to each other and, in particular,
\[
C=\{x\in\R^d:\langle n_k, x\rangle\geq0  \text{ for all } k=1,\dots,d\}.
\]

Since the standard Gaussian distribution is rotationally invariant, Definition~\eqref{933} is equivalent to
\[
\alpha_d(C)=\P[X\in C],
\]
where $X$ is a standard Gaussian vector in $\R^d$. It follows from the last two equations that
\[
\alpha_d(C)=\P[\langle n_k, X\rangle\geq0 \text{ for all } k=1,\dots,d].
\]
The random vector $(\langle n_1, X\rangle,\dots,\langle n_d, X\rangle)$ is centered Gaussian with covariance matrix $\Sigma=\Gamma^{-1}$ given by~\eqref{241} because
\[
\E[\langle n_k, X\rangle\langle n_l, X\rangle]=\langle n_k, n_l\rangle.
\]
Using the formula for its density  function completes the proof.
\end{proof}
\end{proposition}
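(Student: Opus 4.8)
The plan is to turn the solid angle into an ordinary Gaussian orthant probability by a single linear change of variables. First I would use the rotational invariance of the standard Gaussian: taking $Z=X/|X|$ for a standard Gaussian vector $X$ in $\R^d$ in Definition~\eqref{933}, and using that $C$ is a cone, one gets $\alpha_d(C)=\P[X\in C]$. Next, let $V$ be the $d\times d$ matrix with columns $v_1,\dots,v_d$; since the $v_i$ are linearly independent, $V$ is invertible and $\Gamma=V^\top V$. A vector $x\in\R^d$ lies in $C=\pos(v_1,\dots,v_d)$ if and only if $x=V\lambda$ for some $\lambda\in\R^d_+$, equivalently if and only if $V^{-1}x\in\R^d_+$. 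Hence, setting $\Lambda:=V^{-1}X$, we have $\alpha_d(C)=\P[\Lambda\in\R^d_+]$.

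Now $\Lambda$, being a linear image of a standard Gaussian vector, is centered Gaussian with covariance matrix
\[
V^{-1}(V^{-1})^\top=(V^\top V)^{-1}=\Gamma^{-1},
\]
which is positive definite, so $\Lambda$ has Lebesgue density
\[
x\longmapsto\frac{1}{(2\pi)^{d/2}\sqrt{\det(\Gamma^{-1})}}\,\exp\Big(-\tfrac12\langle x,\Gamma x\rangle\Big)=\frac{\sqrt{\det\Gamma}}{(2\pi)^{d/2}}\,\exp\Big(-\tfrac12\langle x,\Gamma x\rangle\Big).
\]
Integrating this density over the positive orthant $\R^d_+$ yields exactly the asserted formula for $\alpha_d(C)=\P[\Lambda\in\R^d_+]$.

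There is essentially no genuine obstacle here; the only points requiring care are the identity $V^{-1}(V^{-1})^\top=(V^\top V)^{-1}$ and the bookkeeping of the determinant, so that $1/\sqrt{\det(\Gamma^{-1})}=\sqrt{\det\Gamma}$ ends up in the numerator. Equivalently — and this is the form adopted in the write-up — one may package the same change of variables through the polar description $C=\{x\in\R^d:\langle n_k,x\rangle\ge0\text{ for }k=1,\dots,d\}$, where $n_1,\dots,n_d$ are the rows of $V^{-1}$ written out via cofactors as in the statement; then the random vector $(\langle n_1,X\rangle,\dots,\langle n_d,X\rangle)$ equals $V^{-1}X$ and has covariance matrix $(\langle n_k,n_l\rangle)_{k,l=1}^d=\Gamma^{-1}$, and one concludes in the same way. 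The substantive input is only the passage from the uniform distribution on the sphere to the standard Gaussian, together with the standard Gaussian density formula.
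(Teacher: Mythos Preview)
Your proposal is correct and is essentially the same argument as the paper's: both pass from the sphere to a standard Gaussian $X$, reduce $\{X\in C\}$ to the event that $V^{-1}X$ lies in the positive orthant, and then read off the density of the centered Gaussian $V^{-1}X$ with covariance $\Gamma^{-1}$. The paper writes this out through the dual vectors $n_k$ defined via cofactors, whereas you go directly through $\Lambda=V^{-1}X$; as you yourself note in the last paragraph, these are the same computation, and your version is marginally more economical.
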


\begin{corollary}\label{533}
Let $C_0,C_1,\dots, C_n,\dots$ be a sequence of cones in $\R^d$ defined by
\[
C_n:=\pos(v_{n1},\dots,v_{nd}),\quad n=0,1,\dots,
\]
where $v_{01},\ldots,v_{0d}$ are linearly independent.  If for all $1\leq i< j\leq d$
\[
\lim_{n\to\infty}\frac{\langle v_{ni},v_{nj}\rangle}{{|v_{ni}||v_{nj}|}}=\frac{\langle v_{0i},v_{0j}\rangle}{{|v_{0i}||v_{0j}|}},
\]
then
\[
\lim_{n\to\infty}\alpha_d(C_n)=\alpha_d(C_0).
\]
\end{corollary}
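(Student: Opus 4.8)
The plan is to reduce everything to the explicit Gaussian integral of Proposition~\ref{528} and then pass to the limit by dominated convergence. The first step is to exploit the \emph{scale invariance} of the solid angle: the cone $\pos(v_1,\dots,v_d)$, and hence $\alpha_d$, is unchanged when each $v_i$ is replaced by a positive multiple of itself, so I may assume from the outset that every $v_{ni}$ (for all $n\ge 0$ and all $i$) is a unit vector. After this normalization the Gram matrix $\Gamma_n:=(\langle v_{ni},v_{nj}\rangle)_{i,j=1}^d$ has ones on the diagonal, and its off-diagonal entries are exactly the normalized inner products $\langle v_{ni},v_{nj}\rangle/(|v_{ni}||v_{nj}|)$ appearing in the hypothesis; thus the assumption of the corollary becomes precisely $\Gamma_n\to\Gamma_0$ entrywise as $n\to\infty$. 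Since $v_{01},\dots,v_{0d}$ are linearly independent, $\Gamma_0$ is positive definite; set $2\delta:=\lambda_{\min}(\Gamma_0)>0$ and note $\det\Gamma_0>0$.

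Next I would use that each eigenvalue of a symmetric matrix is a continuous (indeed $1$-Lipschitz) function of the matrix, so $\lambda_{\min}(\Gamma_n)\to 2\delta$ and there is $n_0$ with $\lambda_{\min}(\Gamma_n)\ge\delta$ for all $n\ge n_0$. For such $n$ the matrix $\Gamma_n$ is positive definite, hence $v_{n1},\dots,v_{nd}$ are linearly independent and Proposition~\ref{528} applies and gives
\[
\alpha_d(C_n)=\frac{\sqrt{\det\Gamma_n}}{(2\pi)^{d/2}}\int_{\R^d_+}\exp\Big(-\tfrac12\langle x,\Gamma_n x\rangle\Big)\,\dd x,\qquad n\ge n_0 .
\]
For each fixed $x\in\R^d_+$ the integrand converges to $\exp(-\tfrac12\langle x,\Gamma_0 x\rangle)$, and by the uniform bound $\langle x,\Gamma_n x\rangle\ge\delta|x|^2$ (valid for $n\ge n_0$) it is dominated by the integrable function $\exp(-\tfrac\delta2|x|^2)$. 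Dominated convergence then gives convergence of the integrals; combined with $\det\Gamma_n\to\det\Gamma_0$ and one more application of Proposition~\ref{528} to $C_0$, this yields $\alpha_d(C_n)\to\alpha_d(C_0)$.

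I do not expect a real obstacle. The one point that needs a little care is producing a \emph{single} dominating function valid for all large $n$: entrywise convergence of the Gram matrices by itself does not obviously suffice, and one genuinely uses the continuity of the smallest eigenvalue to secure the uniform lower bound $\langle x,\Gamma_n x\rangle\ge\delta|x|^2$ for $n\ge n_0$. The remaining steps — scale invariance, continuity of the determinant, and the dominated convergence argument — are routine.
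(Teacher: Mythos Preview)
Your proposal is correct and follows essentially the same approach as the paper: normalize to unit vectors by scale invariance, apply Proposition~\ref{528}, and pass to the limit via dominated convergence. The paper's proof is a two-line sketch that omits exactly the point you flag as needing care---producing a single integrable majorant---so your use of the continuity of $\lambda_{\min}$ to secure the bound $\langle x,\Gamma_n x\rangle\ge\delta|x|^2$ for large $n$ is a welcome clarification rather than a different route.
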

\begin{proof}
If we replace each $v_{ni}$ by $v_{ni}/|v_{ni}|$, the solid angles do not change. After this, the statement readily follows from Proposition~\ref{528} and the dominated convergence theorem.
\end{proof}
\begin{remark}\label{529}
Although we stated Corollary~\ref{533} for cones of full dimension, it continues to hold for $d$-dimensional cones of the form $C_n=\pos(v_{n1},\dots,v_{nd})\subset \R^{m(n)}$, where $m(n)\geq d$. Indeed, the solid angles $\alpha_d(C_n)$ depend on the Gram matrix only and do not depend on the ambient space.
\end{remark}

\subsection{Bounds on the sum of the solid angles of a simplex}
A simplex is called non-degenerate if it has non-empty interior.
\begin{proposition}\label{730}
For every non-degenerate simplex $S\subset\R^d$, where $d\geq3$, we have	
\begin{equation}\label{733}
	0<\gamma_d(S)<\frac12.
\end{equation}
Moreover, for every $h\in(0,1/2)$ there exists non-degenerate simplex $S$ such that $\gamma_d(S)=h$.
\end{proposition}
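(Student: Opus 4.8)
The plan is to prove Proposition~\ref{730} by relating the sum of solid angles of a simplex to solid angles of its normal/edge cones, and then to exhibit degenerating families that realize every value in $(0,1/2)$.

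For the bounds~\eqref{733}, I would argue as follows. The lower bound $\gamma_d(S)>0$ is immediate, since each solid angle $\alpha_d(S,x_i)$ is the probability that a uniform point on the sphere lands in a full-dimensional cone, hence strictly positive. For the upper bound $\gamma_d(S)<1/2$, the key idea is a Crofton-type argument. Writing $S=\conv(x_0,\dots,x_d)$ and using~\eqref{1303}, we have $\alpha_d(S,x_i)=\tfrac12\P[W_1\cap\pos(x_0-x_i,\dots,x_d-x_i)\neq\{0\}]$ for a uniform random line $W_1$. Summing over $i$, the quantity $2\gamma_d(S)$ equals the expected number of vertices $x_i$ such that the translate $S-x_i$ meets a random line through the origin on the ``outgoing'' side; equivalently, fixing a generic direction $u$, it is the expected number of vertices $x_i$ of $S$ from which the ray $x_i+\R_{\geq0}u$ enters $S$. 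For a bounded convex polytope and a generic direction, a ray from a vertex enters the interior only if that vertex is a ``lower'' vertex with respect to $u$; the crucial combinatorial fact is that for a simplex a generic line meets the boundary in exactly two facets, so the ray direction $\pm u$ can enter $S$ from at most one vertex among $\{x_0,\dots,x_d\}$ — actually one must be careful here and count more precisely using the fact that for a $d$-simplex a generic line hits at most two of the $d+1$ facets, and each vertex's cone corresponds to the facets not containing it. A cleaner route: use the known identity that $\gamma_d(S)$ equals the probability that the orthogonal projection of $S$ onto a uniformly random $2$-plane is a triangle (Feldman--Klain~\cite{feldman_klain}); since a non-degenerate simplex projects to a triangle with probability strictly between $0$ and $1$ in dimension $d\geq3$ (it can also project to a quadrilateral), both strict inequalities follow at once, the upper bound being $1/2$ after the normalization. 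I would cite this and give the short argument that the projection is a triangle with positive probability $<1$ when $d\geq3$.

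For the realization of every $h\in(0,1/2)$, the plan is a continuity/degeneration argument. Consider a continuous one-parameter family of non-degenerate simplices $S_t$, $t\in(0,1)$, interpolating between a ``flat'' configuration whose solid-angle sum tends to $0$ and one whose sum tends to $1/2$. For instance, in $\R^d$ with $d\geq3$, take $S_t$ to be a simplex that is almost contained in a hyperplane (so all solid angles are nearly half of a lower-dimensional angle sum, which can be made small) at one end, and at the other end a simplex whose vertices are positioned so that the projection onto a random $2$-plane is a quadrilateral with probability tending to $1$, forcing $\gamma_d\to1/2$. The map $t\mapsto\gamma_d(S_t)$ is continuous by Corollary~\ref{533} (the solid angles depend continuously on the Gram matrix of the edge vectors, and the Gram matrix varies continuously), so by the intermediate value theorem every value in $(0,1/2)$ is attained. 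The main obstacle is making the two endpoints of the family rigorous: exhibiting explicit vertex configurations (or limits thereof) for which $\gamma_d\to0$ and $\gamma_d\to1/2$, and checking that along the path the simplex stays non-degenerate. The $\gamma_d\to0$ end is the easier one (squash $S_t$ toward a $(d-1)$-flat and bound each vertex angle by a shrinking lower-dimensional solid angle); the $\gamma_d\to1/2$ end is the delicate part, where I would use the Feldman--Klain projection interpretation and a ``two long diagonals'' type configuration — e.g. vertices close to two nearly-antipodal pairs in a $3$-flat — so that a random $2$-plane projection is a quadrilateral with probability approaching~$1$.
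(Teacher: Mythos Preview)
Your Feldman--Klain shortcut for the upper bound is misstated. The identity you invoke --- that $\gamma_d(S)$ equals (half) the probability that the projection onto a random $2$-plane is a triangle --- is the statement for $d=3$ only. In general the vertex-angle sum governs projections onto a random \emph{hyperplane}: one has $2\gamma_d(S)=\P[\text{projection of }S\text{ onto a uniform }(d-1)\text{-plane is a }(d-1)\text{-simplex}]$, because the projection is a $(d-1)$-simplex iff the random direction $u$ lies in some $C_i\cup(-C_i)$, where $C_i$ is the tangent cone at $x_i$. Your ``it can also project to a quadrilateral'' reasoning is tied to the $2$-plane picture and does not carry over.

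More importantly, even after the correction the argument is not complete: showing $\gamma_d(S)<\tfrac12$ becomes the assertion that the cones $\pm C_0,\dots,\pm C_d$ fail to cover $\R^d$, and you have not argued this. The paper proves exactly this fact by a direct chamber count: the $d+1$ facet hyperplanes of $S$, translated to the origin, are in general position and by Schl\"afli partition $\R^d$ into $2^{d+1}-2$ full-dimensional cones $D_l$ with $\sum_l \alpha_d(D_l)=1$. Each $C_i$ and each $-C_i$ is one of the $D_l$, accounting for only $2(d+1)$ of them; since $2^{d+1}-2>2(d+1)$ for $d\geq 3$, there are leftover chambers of positive angle, whence $2\gamma_d(S)=\sum_i(\alpha_d(C_i)+\alpha_d(-C_i))<1$. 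Your Crofton/ray-counting sketch, if made precise, collapses to this same covering question; Feldman--Klain repackages it but does not bypass it.

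For the realization of every $h\in(0,\tfrac12)$ your plan coincides with the paper's: continuity (via Corollary~\ref{533}) plus two degenerating families glued at a common simplex. The paper writes the families explicitly, both starting from $S_0=\conv(0,e_1,\dots,e_d)$: in $S_1(t)$ the vertex $e_d$ slides to $e_1+\dots+e_{d-1}$ (the simplex flattens into a hyperplane, so $\gamma_d\to 0$), and in $S_2(t)$ the vertices $e_1,\dots,e_d$ slide toward their common centroid (giving $\gamma_d\to\tfrac12$). Your proposed ``two nearly-antipodal pairs'' construction for the $\gamma_d\to\tfrac12$ end is not obviously correct and would need a concrete verification; the paper's family is simpler to analyse.
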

This fact must be  well-known, but we were not able to find an exact reference. For the reader's convenience, we present a proof here. The idea of the proof is due to Sergei Ivanov~\cite{sI18}.
\begin{proof}
First we show that~\eqref{733} holds. 	The lower bound on $\gamma_d(S)$ is trivial. Let us prove the upper one.
	
	Any non-degenerate $d$-dimensional simplex can be represented as an intersection of $d+1$ closed half-spaces in $\R^d$. Namely, there  exist vectors $y_0,\dots,y_d\in\R^d$ and closed half-spaces $H_0^+,\dots,H_d^+$ with boundaries  $H_0,\dots,H_d$ passing through the origin  such that
	\[
	S=\bigcap_{i=0}^d(y_i+H_i^+).
	\]
For $k=0,\dots,d$, we denote by $H_k^-$ the half-space complementary  to $H_k^+$, that is, the closure of $\R^d\setminus H_k^+$.
	
Since $S$ is non-degenerate, the linear hyperplanes $H_0,\dots,H_d$ are in general position, that is, any $d$ of them have linearly independent normal vectors. By Schl\"afli's formula~\cite{lS50},  the hyperplanes divide $\R^d$ into $m$ polyhedral cones $D_1,\dots, D_m$, where \begin{equation}\label{831}
	m=2^{d+1}-2.
	\end{equation}
	By construction,
	\begin{equation}\label{808}
	\Int D_l\cap \Int D_{l'}=\emptyset\quad\text{for}\quad l\ne l',
	\end{equation}
	where $\Int(\cdot)$ denotes the interior of a set. Therefore,
	\begin{equation}\label{1051}
	\sum_{l=1}^{m}\alpha_d(D_l)=1.
	\end{equation}
Each $D_l$ has the following form:
	\[
	D_l=\bigcap_{i=0}^d H_i^{\epsilon_i}:=D^\epsilon,\quad\text{where}\quad\epsilon=(\epsilon_0,\dots,\epsilon_d)\in\{+,-\}^{d+1}.
	\]
Moreover, if $D^\epsilon\ne\{0\}$ for some $\epsilon=(\epsilon_0,\dots,\epsilon_d)\in\{+,-\}^{d+1}$, then $D^\epsilon=D_l$ for some $l$.

	For $k=0,\dots,d$, denote by $x_k$ the vertex of $S$ which is opposite to the face contained in $y_k+H_k$, and
	denote by $C_k$ the internal cone at the vertex $x_k$:
	\[
	C_k:=\pos(x_0-x_k,\dots,x_d-x_k).
	\]
	In terms of the half-spaces, $C_k$ can be represented as follows:
	\[
	C_k=\bigcap_{i:i\ne k} H_i^+.
	\]
	Since $C_k\subset H_k^-$, we have
	\[
	C_k=D^{(+,\dots,+,-,+,\dots,+)},
	\]
	where all coordinates of the upper index are ``$+$'' except one ``$-$'' on the $k$th place. Similarly,
	\[
	-C_k=D^{(-,\dots,-,+,-,\dots,-)}.
	\]
	Thus it follows from~\eqref{818} and~\eqref{1051} that
	\begin{align*}
	\gamma_d(S)&=\sum_{i=0}^{d}\alpha_d(C_i)=\frac12\sum_{i=0}^{d}(\alpha_d(C_i)+\alpha_d(-C_i))<\frac 12 \sum_{l=1}^{m}\alpha_d(D_l)=\frac12.
	\end{align*}
	The intermediate  inequality is strict because~\eqref{831} implies that $m>2d+2$ for $d\geq3$, which means that there exists $D_l$ (with $\alpha_d(D_l)>0$) such that $D_l\ne C_i$ and $D_l\neq -C_i$ for all $i=0,\dots,d$.

Now let us prove the second part of Proposition~\ref{730}.	
	Let  $e_1,\dots,e_d$ be the standard orthonormal basis in $\R^{d}$.
Consider the simplex
	\begin{align*}
	S_0:=\conv(0,e_1,\dots,e_d).
	\end{align*}
Moreover, consider the following two families of simplices indexed by $t\in[0,1)$:
	\[
	S_1(t):=\conv\Big(0,e_1,\dots,e_{d-1}, (1-t)e_d + t (e_1+\ldots+e_{d-1})\Big)
	\]
	and
	\[
	S_2(t):=\conv\Big(0,e_1-t\cdot\frac{e_1+\dots+e_d}{d},\dots,e_d-t\cdot\frac{e_1+\dots+e_d}{d}\Big).
	\]
We have $S_1(0)=S_2(0)=S_0$	and
\[
\lim_{t\to 1-}\gamma_d(S_1(t))=0,\quad\lim_{t\to 1-}\gamma_d(S_2(t))=\frac12.
\]
Pasting both families together, we obtain a continuous family of simplices whose angle sums change from $0$ to $1/2$.  By continuity (see Section~\ref{subsec:cont}), this completes the proof.	
\end{proof}

\section{Acknowledgement}
The authors are grateful to Anna Gusakova for her collaboration  in proving Proposition~\ref{528} and to Sergei Ivanov for communicating to us the idea of proof of Proposition~\ref{730}.

\bibliographystyle{plainnat}

\end{document}